\newtheorem{theorem}{Theorem}
\newtheorem{lemma}{Lemma}
\newtheorem{proposition}{Proposition}
\newtheorem{definition}{Definition}
\newcommand{\C}{\mathcal{C}}
\newcommand{\F}{\mathcal{F}}
\renewcommand{\L}{\mathcal{L}}
\begin{document}
\title{Gorenstein injective envelopes and covers over two sided noetherian rings}
\author{Alina Iacob}
\thanks{2010 {\it Mathematics Subject Classification}. 18G10, 18G25, 18G35.}
\thanks{key words: Gorenstein injective module, Gorenstein injective envelope,  Gorenstein injective cover, strongly cotorsion module}

\maketitle
\markright{Gorenstein injective envelopes and covers over two sided noetherian rings}

\begin{abstract}
We prove that the class of Gorenstein injective modules is both enveloping and covering over a two sided noetherian ring such that the character modules of Gorenstein injective modules are Gorenstein flat. %We also prove that a two sided noetherian ring $R$ that satisfies the Auslander condition with $id_RR < \infty$, and has finite finitistic injective dimension has the property that for every Gorenstein injective module $_RM$, its character module $M^+$ is Gorenstein flat.\\
In the second part of the paper we consider the connection between the Gorenstein injective modules and the strongly cotorsion modules. We prove that when the ring $R$ is commutative noetherian of finite Krull dimension, the class of Gorenstein injective modules coincides with that of strongly cotorsion modules if and only if the ring $R$ is in fact Gorenstein.
\end{abstract}

\section{Introduction}

The class of Gorenstein injective modules was introduced by Enochs and Jenda in 1995 (\cite{enochs:95:gorenstein}). Together with the Gorenstein projective and with the Gorenstein flat modules, they are the key elements of Gorenstein homological algebra. So it is natural to consider the question of the existence of the Gorenstein injective envelopes and covers. Their existence is known over Gorenstein rings (\cite{enochs:00:relative}). It is also known (\cite{enochs:12:gorenstein.injective.covers}) that when $R$ is a commutative noetherian ring such that  the character modules of Gorenstein injective modules are Gorenstein flat, then the class of Gorenstein injective modules is both enveloping and covering.
%We prove that if furthermore the $R$ has finite Krull dimension, and if the injective envelope of $R$, $\mathcal{E}(R)$ is flat, then the following are equivalent:\\
%1. the character modules of Gorenstein injective modules are Gorenstein flat.\\
%2. the class of Gorenstein injective modules is closed under direct limits.
We extend these results to two sided noetherian rings (not necessarily commutative). More precisely, we show that when $R$ is a two sided noetherian ring and the character module of any Gorenstein injective left $R$-module is Gorenstein flat, the class of Gorenstein injective left $R$-modules is both enveloping and covering.\\
 %We also show that a two sided noetherian ring $R$ that satisfies the Auslander condition with $id_RR < \infty$, and also has finite finitistic injective dimension has the property that the character modules of Gorenstein injective modules are Gorenstein flat.\\
 Then we consider the relation between the Gorenstein injective modules and the strongly cotorsion modules. It is known that over a commutative noetherian ring $R$ of finite Krull dimension, any Gorenstein injective module is strongly cotorsion (\cite{salarian:03: strongly cotorsion}). We prove that over such a ring the class of Gorenstein injective modules coincides with that of strongly cotorsion modules if and only if the ring $R$ is in fact Gorenstein.

\section{Preliminaries}

Throughout this section $R$ denotes an associative ring with unity. By $R$-module we mean a left $R$-module.\\

\begin{definition} (\cite{enochs:00:relative}, definition 10.1.1)
A module $G$ is Gorenstein injective if there is an exact complex $ \ldots \rightarrow E_1 \rightarrow E_0
\rightarrow E_{-1} \rightarrow \ldots $ of
injective modules which remains exact under application of the functors $Hom (Inj, -)$ , where $Inj$ stands for all injective modules, and such that $G = Ker(E_0 \rightarrow E_{-1})$.
\end{definition}

We will use the notation $\mathcal{GI}$ for the class of Gorenstein injective modules.\\

A stronger notion is that of strongly Gorenstein injective module:\\
\begin{definition} (\cite{bennis:07:stronglygorenstein}, definition 2.1)
An $R$-module $M$ is strongly Gorenstein injective if there exists an exact and $Hom(Inj,-)$ exact sequence $$\ldots \rightarrow E \xrightarrow{f} E \xrightarrow{f} E \rightarrow \ldots$$ with $E$ injective and with $M = Ker(f)$.
\end{definition}

It is known (\cite{bennis:07:stronglygorenstein}, Theorem on page 3) that a module is Gorenstein injective if and only if it is a direct summand of a strongly Gorenstein injective one.

The Gorenstein flat modules are defined in terms of the tensor product.\\
\begin{definition} (\cite{enochs:00:relative}, Definition 10.3.1)
An $R$-module $G$ is Gorenstein flat if there exists an exact and $Inj \otimes -$ exact sequence of flat modules $$\ldots \rightarrow F_1 \rightarrow F_0 \rightarrow F_{-1} \rightarrow \ldots $$ such that $G=Ker(F_0 \rightarrow F_{-1})$.
\end{definition}

We will use the notation $\mathcal{GF}$ for the class of Gorenstein flat modules.

We recall that the character module of a left $R$-module $M$ is the right $R$-module $M^+ = Hom_Z (M, Q/Z)$.\\
It is known (\cite{holm:04:gorenstein}, Theorem 3.6) that if the ring $R$ is right coherent then a module $G$ is Gorenstein flat if and only if its character module $G^+$ is Gorenstein injective.

Given a class of $R$-modules $\mathcal{F}$, we will denote as usual by $\F^\bot$ the class of all $R$-modules $M$ such that $Ext^1(F,M)=0$ for every $F \in \F$.\\
The left orthogonal class of $\F$, denoted $^\bot \F$, is the class of all $_RN$ such that $Ext^1(N,F)=0$ for every $F \in \F$.\\

We recall that a pair $(\mathcal{L}, \mathcal{C})$ is a \emph{cotorsion pair} if $\mathcal{L} ^\bot = \mathcal{C}$ and $^\bot \mathcal{C} = \mathcal{L}$.\\
A cotorsion pair $(\mathcal{L}, \mathcal{C})$ is \emph{complete} if for every $_RM$ there exists exact sequences $ 0 \rightarrow C \rightarrow L \rightarrow M \rightarrow 0$ and $0 \rightarrow M \rightarrow C'\rightarrow L' \rightarrow 0$ with $C$, $C'$ in $\mathcal{C}$ and $L$, $L'$ in $\mathcal{L}$.\\
%A cotorsion pair $(\mathcal{L}, \mathcal{C})$ is hereditary if $Ext^i(L,C)=0$ for any $L \in \mathcal{L}$, any $C \in \mathcal{C}$, and any $i \ge 1$.\\

\begin{definition} (\cite{garcia:99:covers}, Definition 1.2.10)
 A cotorsion pair $(\L, \C)$ is called hereditary
if one of the following equivalent statements hold:
\begin{enumerate}
\item $\L$ is resolving, that is, $\L$ is closed under taking kernels of epimorphisms.
\item $\C$ is coresolving, that is, $\C$ is closed under taking cokernels of monomorphisms.
\item $Ext^i (F, C) = 0$ for any $F \in \F$ and $C\in \C$ and $i\geq 1$.
\end{enumerate}
\end{definition}

Our main results in section 3 are about the existence of covers and envelopes with respect to the class of Gorenstein injective modules. We recall the following definitions:\\

\begin{definition} (\cite{garcia:99:covers}, Definition 1.2.3) Let $\mathcal{C}$ be a class of left $R$-modules. A homomorphism $\phi: M \rightarrow C$ is a $\mathcal{C}$-preenvelope of $M$ if $C \in \mathcal{C}$ and if for any homomorphism $\phi ':M \rightarrow C'$ with $C' \in \mathcal{C}$, there exists $u \in Hom_R(C,C')$ such that $\phi ' = u \phi$.\\
A $\mathcal{C}$-preenvelope $\phi: M \rightarrow C$ is a $\mathcal{C}$-envelope if any endomorphism $u \in Hom_R(C,C)$ such that $\phi = u \phi$ is an automorphism of $C$.\\
(Pre)covers and covers are defined dually.\\
In the case when $\C$ is the class of Gorenstein injective modules, a $\C$-(pre)cover (preenvelope respectively) is called a Gorenstein injective (pre)cover (preenvelope respectively).

\end{definition}

%We also recall that a cotorsion pair $(\mathcal{L}, \mathcal{C})$  is \emph{perfect} if $\mathcal{C}$ is an enveloping class and $\mathcal{L}$ is a covering class.\\

Duality pairs were introduced by Holm and J{\o}rgensen in \cite{holm:09:duality}. We recall their definition (the opposite ring is denoted $R^{op}$):\\
\begin{definition} (\cite{holm:09:duality}, Definition 2.1)
A duality pair over a ring $R$ is a pair $(\textsc{M},\textsc{C})$ where $\textsc{M}$ is a class of $R$-modules and $\textsc{C}$ is a class of $R^{op}$-modules, subject to the following conditions:\\
(1) For an $R$-module $M$, one has $M \in \textsc{M}$ if and only if $M^+ \in \textsc{C}$.\\
(2) $\textsc{C}$ is closed under direct summands and finite direct sums.
\end{definition}

A duality pair $(\textsc{M},\textsc{C})$ is called (co)product closed if the class $\textsc{M}$ is closed under (co)products in the category of all $R$-modules. \\
A duality pair $(\textsc{M},\textsc{C})$ is called \emph{perfect } if it is coproduct-closed, if $\textsc{M}$ is closed under extensions, and if $R$ belongs to $\textsc{M}$.

We also recall that a cotorsion pair $(\mathcal{L}, \mathcal{C})$  is \emph{perfect} if $\mathcal{C}$ is an enveloping class and $\mathcal{L}$ is a covering class.\\

The following result is \cite{holm:09:duality}, Theorem 3.1:\\
\textbf{Theorem}. Let $(\textsc{M},\textsc{C})$ be a duality pair. Then $\textsc{M}$ is closed under pure submodules, pure quotients, and pure extensions. Furthermore, the following hold:\\
(a) If $(\textsc{M},\textsc{C})$ is product-closed then $\textsc{M}$ is preenveloping.\\
(b) If $(\textsc{M},\textsc{C})$ is coproduct-closed then $\textsc{M}$ is covering.\\
(c) If $(\textsc{M},\textsc{C})$ is perfect then $(\textsc{M}, \textsc{M}^\bot)$ is a perfect cotorsion pair.

%We recall that a ring $R$ is Gorenstein if it is two sided noetherian and if it has finite self injective dimension on both sides. It is well known that commutative Gorenstein rings are fundamental objects in commutative algebra and algebraic geometry. Bass proved in \cite{bass:63:Gor.injective} that a commutative noetherian ring is Gorenstein if and only if the flat dimension of the $i$th term in a minimal injective resolution of $R$ as an $R$-module is at most $i-1$ for any $i \ge 1$. In the non-commutative case, Auslander proved that this condition is left-right symmetric. In this case $R$ is said to satisfy the Auslander condition. Z. Huang further studied the Auslander condition. In particular he proved the following result:\\
%\begin{theorem} (this is part of \cite{huang:10:auslander}, Theorem 1.3)\\
%Let $R$ be a left and right noetherian ring. Then the following statements are equivalent:\\
%(1) $_RR$ satisfies the Auslander condition.\\
%(2) The flat dimension of the $i$th term in a minimal injective resolution of any left $R$-module $M$ is at most the flat dimension of $M$ plus $i-1$ for any $i \ge 1$.\\
%(3) The flat dimension of the injective envelope of any left $R$-module $M$ is at most the flat dimension of $M$.\\
%(4) The injective dimension of the flat cover of any left $R$-module $M$ is at most the injective dimension of $M$.
%\end{theorem}

\section{Gorenstein injective envelopes and covers}

We recall the following results from \cite{enochs:12:gorenstein.injective.covers}:\\

\begin{lemma} (\cite{enochs:12:gorenstein.injective.covers}, Corollary 1)
If $R$ is left noetherian then $(^\bot \mathcal{GI}, \mathcal{GI})$ is a complete hereditary cotorsion pair.
\end{lemma}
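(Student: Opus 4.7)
The plan is to establish three claims in turn: that $(^\bot \mathcal{GI}, \mathcal{GI})$ forms a cotorsion pair, that it is complete, and that it is hereditary. The left-noetherian hypothesis enters at two places: to guarantee good closure properties of $\mathcal{GI}$, and to supply a cardinality bound that allows the pair to be cogenerated by a set.

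First I would collect the closure properties of $\mathcal{GI}$ over a left noetherian ring. Since direct products of injective modules remain injective in this setting, and direct products are exact in the category of modules, any direct product of complete injective resolutions is again a complete injective resolution; hence $\mathcal{GI}$ is closed under arbitrary products. A Horseshoe-type argument, combined with the direct summand characterization of strongly Gorenstein injectives recalled above, gives closure under extensions and direct summands; a short dimension-shift using an injective module sandwiched between two Gorenstein injectives then yields closure under cokernels of monomorphisms. In particular $\mathcal{GI}$ is coresolving, which, once we know we have a cotorsion pair, gives hereditariness via condition (2) of the relevant definition.

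The heart of the proof is to produce a set $\mathcal{S}$ of modules with $\mathcal{S}^\bot = \mathcal{GI}$. From such a set one obtains the cotorsion pair property because $\mathcal{S} \subseteq {}^\bot \mathcal{GI}$ forces $(^\bot \mathcal{GI})^\bot \subseteq \mathcal{S}^\bot = \mathcal{GI}$, while the reverse inclusion is automatic; one obtains completeness because the Eklof--Trlifaj theorem upgrades any set-cogenerated cotorsion pair to a complete one. To construct $\mathcal{S}$, fix an infinite cardinal $\kappa \geq |R|$ and show, using the noetherian hypothesis, that every Gorenstein injective module admits a continuous well-ordered ascending filtration by Gorenstein injective submodules of cardinality at most $\kappa$. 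Then $\mathcal{S}$ may be taken to be a set of isomorphism representatives of Gorenstein injective modules of cardinality $\leq \kappa$, and the Eklof lemma together with the closure properties above verifies $\mathcal{S}^\bot = \mathcal{GI}$.

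The main obstacle is this cardinality/filtration step: producing the bound $\kappa$ and showing that sufficiently small Gorenstein injective submodules can be used to filter an arbitrary Gorenstein injective module. This is precisely where the noetherian hypothesis is used substantively; the remaining steps (closure properties, orthogonality bookkeeping, and the invocation of Eklof--Trlifaj) are largely formal once the filtration is available.
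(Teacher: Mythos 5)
This lemma is quoted in the paper without proof (it is Corollary~1 of the cited Enochs--Iacob paper). The proof there does not go through Eklof--Trlifaj at all: it rests on Krause's theorem that over a left noetherian ring the inclusion of the homotopy category of totally acyclic complexes of injectives into the homotopy category of injectives has a left adjoint, which yields a special Gorenstein injective preenvelope $0 \rightarrow M \rightarrow G \rightarrow L \rightarrow 0$ (with $G \in \mathcal{GI}$, $L \in {}^\bot\mathcal{GI}$) for every module $M$; Wakamatsu's lemma then gives $({}^\bot\mathcal{GI})^\bot = \mathcal{GI}$, Salce's lemma gives the special precovers, and coresolvingness of $\mathcal{GI}$ gives hereditariness. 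Your first paragraph (closure properties, coresolving $\Rightarrow$ hereditary) is fine, but the central step of your argument has a genuine directional error.

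You take $\mathcal{S}$ to be a set of representatives of Gorenstein injective modules of cardinality at most $\kappa$ and claim that filtering every Gorenstein injective module by members of $\mathcal{S}$ yields $\mathcal{S}^\bot = \mathcal{GI}$. The Eklof lemma runs the other way: an $\mathcal{S}$-filtration of $G$ shows $G \in {}^\bot(\mathcal{S}^\bot)$, i.e.\ it places $\mathcal{GI}$ inside the \emph{left} class of the pair generated by $\mathcal{S}$; it gives no control over $\mathcal{S}^\bot$. Moreover your intermediate claim $\mathcal{S} \subseteq {}^\bot\mathcal{GI}$ fails for this choice of $\mathcal{S}$, since Gorenstein injective modules need not be left-orthogonal to one another. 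Concretely, over $R = k[x]/(x^2)$ every module is Gorenstein injective, so $\mathcal{S}$ is a set of representatives of all small modules, ${}^\bot\mathcal{GI}$ is the class of projective modules, $k \in \mathcal{S}$ satisfies $Ext^1(k,k) \neq 0$, and $\mathcal{S}^\bot$ is exactly the class of injective modules, which is strictly smaller than $\mathcal{GI}$. Deconstructibility of $\mathcal{GI}$ over a noetherian ring is a true theorem, but what it buys is that $\mathcal{GI}$ is a (special) precovering class --- information about $\mathcal{GI}$ sitting on the left of a cotorsion pair --- not that $({}^\bot\mathcal{GI}, \mathcal{GI})$ is a complete cotorsion pair. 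To make the Eklof--Trlifaj route work you would need a \emph{set} of test modules inside ${}^\bot\mathcal{GI}$ whose right orthogonal is exactly $\mathcal{GI}$, and your construction does not produce one; this is precisely the difficulty that the Krause-preenvelope argument circumvents.
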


\begin{theorem} (\cite{enochs:12:gorenstein.injective.covers}, Proposition 2)
Let $R$ be a left noetherian ring. The class of Gorenstein injective modules is enveloping if and only if the class $^\bot \mathcal{GI}$ is covering.
\end{theorem}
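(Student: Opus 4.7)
The plan is to use the complete hereditary cotorsion pair $({}^\bot \mathcal{GI}, \mathcal{GI})$ from Lemma 1, together with Wakamatsu's Lemma, to convert envelopes on one side into covers on the other via a symmetric pushout/pullback construction. Both implications follow the same pattern: start from a special precover (or preenvelope) supplied by completeness of the cotorsion pair, then improve it to a cover (or envelope) by splicing in an envelope (or cover) of the ``extra'' term appearing as kernel or cokernel. Along the way I will use repeatedly that ${}^\bot \mathcal{GI}$ and $\mathcal{GI}$ are each closed under extensions (automatic for any cotorsion pair), and that every module embeds into an injective module, which is Gorenstein injective.

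For the forward direction, suppose $\mathcal{GI}$ is enveloping and fix an $R$-module $N$. Completeness gives a short exact sequence $0 \to C \to L \to N \to 0$ with $L \in {}^\bot \mathcal{GI}$ and $C \in \mathcal{GI}$. Let $\phi \colon C \to G$ be a $\mathcal{GI}$-envelope, which is injective since $C$ embeds in an injective. Forming the pushout of $\phi$ along $C \hookrightarrow L$ produces two exact rows: $0 \to G \to L' \to N \to 0$ and $0 \to L \to L' \to G/C \to 0$. By Wakamatsu's Lemma, $G/C \in {}^\bot \mathcal{GI}$, and since ${}^\bot \mathcal{GI}$ is closed under extensions, $L' \in {}^\bot \mathcal{GI}$. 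I then claim that $L' \to N$ is a ${}^\bot \mathcal{GI}$-cover. It is a precover by the usual splitting argument: for any $L'' \to N$ with $L'' \in {}^\bot \mathcal{GI}$, the pullback extension $0 \to G \to L'' \times_N L' \to L'' \to 0$ splits because $Ext^1(L'',G) = 0$, yielding the required lift. For the cover property, any endomorphism $u$ of $L'$ over $N$ restricts to an endomorphism $\tilde u$ of $G = \ker(L' \to N)$; tracing through the pushout square one checks $\tilde u \circ \phi = \phi$, so $\tilde u$ is an automorphism by the envelope property of $\phi$, and then $u$ is an automorphism by the five lemma.

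The reverse direction is formally dual: given $M$, take a special preenvelope $0 \to M \to C \to L \to 0$ with $C \in \mathcal{GI}$ and $L \in {}^\bot \mathcal{GI}$, let $\pi \colon L' \to L$ be a ${}^\bot \mathcal{GI}$-cover (which is surjective, since every projective lies in ${}^\bot \mathcal{GI}$), and pull back to obtain $0 \to M \to P \to L' \to 0$. Wakamatsu's Lemma for covers gives $\ker \pi \in ({}^\bot \mathcal{GI})^\bot = \mathcal{GI}$, and combined with $C \in \mathcal{GI}$ and closure under extensions this forces $P \in \mathcal{GI}$. Verifying that $M \to P$ is then a $\mathcal{GI}$-envelope proceeds by the analogous argument, using the cover property of $\pi$ in place of the envelope property of $\phi$. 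The main obstacle in both directions is precisely this last verification: one must argue that any compatible endomorphism of $L'$ (respectively $P$) induces a map on the kernel (respectively cokernel) that is forced to be an automorphism by the universal property of $\phi$ (respectively $\pi$), and then lift this to the whole module via the five lemma. The rest of the argument is essentially diagram chasing supported by Lemma 1.
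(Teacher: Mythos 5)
This theorem is quoted in the paper from \cite{enochs:12:gorenstein.injective.covers} without proof, so there is no internal argument to compare yours against; judged on its own, your proposal has a fatal flaw, and it appears in both directions. In the forward direction you take the special sequence $0 \to C \to L \to N \to 0$ with $C \in \mathcal{GI}$ and $L \in {}^\bot\mathcal{GI}$, and then form the $\mathcal{GI}$-envelope $\phi \colon C \to G$. But $C$ already lies in $\mathcal{GI}$, so $\mathrm{id}_C$ is a $\mathcal{GI}$-envelope of $C$; since envelopes are unique up to isomorphism, $\phi$ is an isomorphism, the pushout $L'$ is just $L$, and your argument collapses to the assertion that every special ${}^\bot\mathcal{GI}$-precover is a cover. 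That assertion is false in general (already $R\oplus R \to R$, $(x,y)\mapsto x$, is a special precover for the pair $(\mathrm{Proj},\,R\text{-Mod})$ but not a cover). The reverse direction degenerates identically: $L$ is already in ${}^\bot\mathcal{GI}$, so the cover $\pi\colon L'\to L$ is an isomorphism and the pullback is $C$ itself. In neither direction is the hypothesis (enveloping, resp.\ covering) ever applied to a module outside the relevant class, so it is never actually used.

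Even after repairing this --- say, by starting from $0 \to N' \to P \to N \to 0$ with $P$ projective and enveloping $N'$, which need not be Gorenstein injective --- the step you yourself flag as ``the main obstacle'' does not close. If $u$ is an endomorphism of the pushout $L'$ over $N$, the relation $(L'\to N)\circ u=(L'\to N)$ only shows that $u\iota_P-\iota_P$ lands in $G$; restricting to $N'$ yields that $\tilde u\phi-\phi\colon N'\to G$ is the restriction of some map $P\to G$, not that it vanishes. So the envelope property of $\phi$ cannot be invoked, and the five-lemma conclusion never gets started. Indeed, the statement cannot be a formal consequence of ``complete hereditary cotorsion pair plus Wakamatsu's Lemma plus the class on the right contains the injectives and the class on the left contains the projectives'': the pair $(\mathrm{Proj},\,R\text{-Mod})$ satisfies all of these, $R\text{-Mod}$ is trivially enveloping, yet $\mathrm{Proj}$ is covering only over perfect rings. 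Whatever proves Theorem 1 must use genuinely special features of $({}^\bot\mathcal{GI},\mathcal{GI})$ --- for instance that $Inj\subseteq{}^\bot\mathcal{GI}\cap\mathcal{GI}$ --- and for that you need the argument in \cite{enochs:12:gorenstein.injective.covers} itself.
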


We prove that if the ring $R$ is two sided noetherian and has the property that the character modules of Gorenstein injective left $R$-modules are Gorenstein flat right $R$-modules, then the class of Gorenstein injective modules is enveloping.
Since the class $^\bot \mathcal{GI}$ is closed under arbitrary direct sums, it suffices to prove that $^\bot \mathcal{GI}$ is the left half of a duality pair. Then by \cite{holm:09:duality}, Theorem 3.1 (b), the class $^\bot \mathcal{GI}$ is covering. By Theorem 1, $\mathcal{GI}$ is enveloping in this case.

%We recall the following definition (\cite{holm:09:duality}): given a class of left $R$-modules, $\mathcal{M}$, and a class of right $R$-modules, $\mathcal{C}$, the pair $(\mathcal{M}, \mathcal{C})$ is a duality pair, if the following conditions are satisfied:\\
%1. a module $M$ is in $\mathcal{M}$ if and only if $M^+$ is in $\mathcal{C}$;\\
%2. the class $\mathcal{C}$ is closed under finite direct sums and under direct summands.

We start with the following \\

\begin{lemma}
Let $R$ be a two sided noetharin ring such that for any (left) Gorenstein injective module $M$, its character module, $M^+$ is a (right) Gorenstein flat module. Then a left $R$-module $K$ is in the class $^\bot \mathcal{GI}$ if and only if its character module $K^+$ is in $\mathcal{GF}^\bot$.
\end{lemma}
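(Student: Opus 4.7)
The plan is to translate both sides of the equivalence into vanishing statements for $Tor$ via the classical character-module adjunction. Concretely, I would use the natural isomorphism $(A\otimes_R B)^+\cong Hom_R(B,A^+)$ for a right $R$-module $A$ and a left $R$-module $B$, and derive it in either variable to obtain
\[
 Ext_R^n(B,A^+)\;\cong\;Tor_n^R(A,B)^+\;\cong\;Ext_{R^{op}}^n(A,B^+).
\]
Combined with the two ``$\mathcal{GI}$--$\mathcal{GF}$'' correspondences available here---Holm's theorem recalled before the lemma ($G\in\mathcal{GF}\Rightarrow G^+\in\mathcal{GI}$, using right coherence) and the ring-theoretic hypothesis of the lemma ($M\in\mathcal{GI}\Rightarrow M^+\in\mathcal{GF}$)---this machinery lets one move freely between $Ext$ in $R$-Mod and $Ext$ in $R^{op}$-Mod.

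First I would handle the forward direction. For $K\in{}^\bot\mathcal{GI}$ and any $G\in\mathcal{GF}$, Holm's theorem gives $G^+\in\mathcal{GI}$, so $Ext^1_R(K,G^+)=0$; the adjunction then successively yields $Tor_1^R(G,K)=0$ and $Ext^1_{R^{op}}(G,K^+)=0$, proving $K^+\in\mathcal{GF}^\bot$. For the reverse direction, assume $K^+\in\mathcal{GF}^\bot$ and let $M\in\mathcal{GI}$. By hypothesis $M^+\in\mathcal{GF}$, so $Ext^1_{R^{op}}(M^+,K^+)=0$ and hence $Tor_1^R(M^+,K)=0$; a further application of the adjunction then produces $Ext^1_R(K,M^{++})=0$.

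The main obstacle I anticipate is the remaining step: passing from $Ext^1_R(K,M^{++})=0$ to $Ext^1_R(K,M)=0$. My plan is to invoke the canonical pure monomorphism $M\hookrightarrow M^{++}$ and show that it splits, so that $M$ becomes a direct summand of $M^{++}$ and $Ext^1_R(K,M)$ becomes a direct summand of the vanishing group $Ext^1_R(K,M^{++})$. This split holds provided $M$ is pure-injective, a property known for Gorenstein injective modules over noetherian rings. Should a direct pure-injectivity citation prove unavailable in our generality, a fallback strategy is to use Bennis--Mahdou's characterization of $M\in\mathcal{GI}$ as a direct summand of a strongly Gorenstein injective module $N$ fitting in $0\to N\to E\to N\to 0$ with $E$ injective, together with dimension-shifting in the long exact $Ext$-sequence arising from this presentation.
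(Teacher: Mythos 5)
Your forward direction is correct and is essentially the paper's own argument: the swap $Ext^1_R(K,B^+)\cong Ext^1_{R^{op}}(B,K^+)$ combined with Holm's theorem that $B\in\mathcal{GF}$ implies $B^+\in\mathcal{GI}$. The reverse direction, however, has a genuine gap. You correctly derive $Ext^1_R(K,M^{++})=0$ for every $M\in\mathcal{GI}$, but the passage from $M^{++}$ back to $M$ is precisely the hard point, and neither of your proposed fixes closes it. Pure-injectivity of Gorenstein injective modules over a two sided noetherian ring is not a known fact and is not available in this paper's toolkit; it is far stronger than the ``strongly cotorsion'' property the paper establishes (and only over $n$-perfect rings, in Proposition 2), so you cannot invoke it to split $M\hookrightarrow M^{++}$. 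The fallback also fails: reducing to a strongly Gorenstein injective $N$ with $0\to N\to E\to N\to 0$, $E$ injective, dimension shifting gives only the periodicity $Ext^1_R(K,N)\cong Ext^i_R(K,N)$ for all $i\ge 1$; since you have no degree in which the $Ext$ group is known to vanish (no finiteness hypothesis on $K$), periodicity yields nothing.

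The paper's proof of the converse avoids testing $K$ against an arbitrary $M\in\mathcal{GI}$ altogether. By completeness of the cotorsion pair $({}^\bot\mathcal{GI},\mathcal{GI})$ (Lemma 1) there is an exact sequence $0\to K\to G\to V\to 0$ with $G\in\mathcal{GI}$ and $V\in{}^\bot\mathcal{GI}$. Dualizing gives $0\to V^+\to G^+\to K^+\to 0$ with $G^+\in\mathcal{GF}$ by the hypothesis on $R$; the forward implication gives $V^+\in\mathcal{GF}^\bot$, and $K^+\in\mathcal{GF}^\bot$ by assumption, so $G^+\in\mathcal{GF}\cap\mathcal{GF}^\bot$, hence $G^+$ is flat and $G$ is injective. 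Then $K$ is the kernel of an epimorphism between the modules $G$ and $V$ of ${}^\bot\mathcal{GI}$, and since the cotorsion pair is hereditary (its left class is resolving), $K\in{}^\bot\mathcal{GI}$. You should replace your reverse direction with an argument of this type; the special exact sequence is what substitutes for the unavailable splitting of $M\to M^{++}$.
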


\begin{proof}
$\Rightarrow$ Let $K \in ^\bot \mathcal{GI}$. For any Gorenstein flat right $R$-module $B$ we have $B^+ \in \mathcal{GI}$ (\cite{holm:04:gorenstein}, Theorem 3.6). It follows that $Ext^1(K, B^+)=0$. Then $Ext^1 (B, K^+) \simeq Ext^1 (K,B^+)=0$. So $K^+ \in \mathcal{GF}^ \bot$.\\
$\Leftarrow$ Assume that $K$ is a left $R$-module such that $K^+ \in \mathcal{GF}^ \bot$.\\
Since $R$ is noetherian there exists an exact sequence $0 \rightarrow K \rightarrow G \rightarrow V \rightarrow 0$ with $G$ Gorenstein injective and with $V \in ^ \bot \mathcal{GI}$ (by Lemma 1). This gives an exact sequence $0 \rightarrow V^+ \rightarrow G^+ \rightarrow K^+ \rightarrow 0$ with $G^+$ Gorenstein flat. Since $V \in ^\bot \mathcal{GI}$ we have that $V^+ \in \mathcal{GF}^ \bot$. Since $K^+$ is also in $\mathcal{GF}^ \bot$ it follows that $G^+ \in \mathcal{GF} \cap \mathcal{GF}^ \bot$. Thus $G^+$ is flat and therefore $G$ is injective (by \cite{enochs:00:relative}, Theorem 3.2.16).\\
So we have an exact sequence $0 \rightarrow K \rightarrow G \rightarrow V \rightarrow 0$ with $G \in Inj \subseteq ^\bot \mathcal{GI}$ and with $V \in ^\bot \mathcal{GI}$. Since $(^\bot \mathcal{GI}, \mathcal{GI})$ is a hereditary cotorsion pair, it follows that $K \in ^\bot \mathcal{GI}$.
%Then for any Gorenstein injective module $A$ we have an exact sequence $0 = Ext^1(V, A) \rightarrow Ext^1(G, A) \rightarrow Ext^1(K,A) \rightarrow Ext^2(V, A)=0$ (since $(^\bot \mathcal{GI}, \mathcal{GI})$ is a hereditary cotorsion pair). Since $Ext^1(G,A)=0$ it follows that $Ext^1(K,A)=0$. Thus $K \in ^\bot \mathcal{GI}$.
\end{proof}

Now we can prove:\\
\begin{theorem}
Let $R$ be a two sided noetherian ring such that the character modules of Gorenstein injective left $R$-modules are Gorenstein flat right $R$-modules. Then $(^\bot \mathcal{GI}, \mathcal{GF}^\bot)$ is a duality pair. In particular the class $^\bot \mathcal{GI}$ is covering.
\end{theorem}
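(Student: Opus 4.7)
The plan is to verify directly the two conditions in the definition of a duality pair, using Lemma 2 for the nontrivial part, and then to invoke Theorem 3.1(b) of Holm--J{\o}rgensen (stated verbatim in the preliminaries) to conclude covering.

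First I would address condition~(1) of the duality pair definition: for a left $R$-module $K$, one has $K \in {}^{\bot}\mathcal{GI}$ if and only if $K^{+} \in \mathcal{GF}^{\bot}$. This is precisely the content of Lemma~2, so nothing needs to be reproved here. Condition~(2) requires that the right-hand class $\mathcal{GF}^{\bot}$ be closed under direct summands and finite direct sums. Both are immediate from the additivity of $Ext^1$: if $C = C_1 \oplus C_2$ and $F \in \mathcal{GF}$, then $Ext^1(F,C) \simeq Ext^1(F,C_1)\oplus Ext^1(F,C_2)$, so $C \in \mathcal{GF}^{\bot}$ iff $C_1,C_2 \in \mathcal{GF}^{\bot}$. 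Thus $({}^{\bot}\mathcal{GI},\mathcal{GF}^{\bot})$ is a duality pair.

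For the second assertion, I would use part~(b) of the cited Holm--J{\o}rgensen theorem: if a duality pair $(\textsc{M},\textsc{C})$ is coproduct-closed, then $\textsc{M}$ is covering. So I only need to verify that $^{\bot}\mathcal{GI}$ is closed under arbitrary coproducts. This again follows from a standard $Ext$ identity: for any family $\{N_i\}$ of left $R$-modules and any Gorenstein injective $G$,
\[
Ext^1\!\Bigl(\bigoplus_i N_i,\, G\Bigr) \;\simeq\; \prod_i Ext^1(N_i,G),
\]
which vanishes whenever each factor vanishes. Hence $\bigoplus_i N_i \in {}^{\bot}\mathcal{GI}$ whenever all $N_i \in {}^{\bot}\mathcal{GI}$, the duality pair is coproduct-closed, and Theorem 3.1(b) yields that ${}^{\bot}\mathcal{GI}$ is covering.

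I do not expect any real obstacle here, since Lemma~2 has already done the substantive work of translating the orthogonality condition across the character-module duality. The only thing to be careful about is to appeal to the Holm--J{\o}rgensen theorem in the correct direction: we are using the coproduct-closed version (part (b)) to obtain covering for the left-hand class, not the product-closed version (part (a)) which would give preenveloping.
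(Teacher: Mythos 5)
Your proposal is correct and follows essentially the same route as the paper: condition (1) of the duality pair is exactly Lemma 2, condition (2) follows from the standard closure properties of a right $Ext$-orthogonal class, and the covering assertion comes from coproduct-closedness of $^{\bot}\mathcal{GI}$ (via $Ext^1(\bigoplus_i N_i, G) \simeq \prod_i Ext^1(N_i,G)$) together with Theorem 3.1(b) of Holm--J{\o}rgensen. The only difference is that you spell out the $Ext$ identities that the paper leaves implicit.
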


\begin{proof}
By Lemma 2 we have that $K \in ^\bot \mathcal{GI}$ if and only if $K^+ \in \mathcal{GF}^\bot$.\\
Any right orthogonal class (in particular $\mathcal{GF}^\bot$) is closed under direct products, and so it is closed under finite direct sums.
Also, any right orthogonal class (so $\mathcal{GF}^\bot$ in particular) is closed under direct summands.\\
Thus $(^\bot \mathcal{GI}, \mathcal{GF}^\bot)$  is a duality pair. Since the class $^\bot \mathcal{GI}$ is closed under direct sums, it follows (by \cite{holm:09:duality}, Theorem 3.1) that $^\bot \mathcal{GI}$ is covering.
\end{proof}

\begin{theorem}
Let $R$ be a two sided noetherian ring such that the character modules of Gorenstein injective left $R$-modules are Gorenstein flat right $R$-modules. Then the class of Gorenstein injective modules is enveloping.
\end{theorem}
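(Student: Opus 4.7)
The plan is to deduce this theorem as an immediate corollary of the two preceding results, Theorem 1 and Theorem 2, with essentially no additional work. All the substance of the argument has already been invested in setting up the duality pair $(^\bot\mathcal{GI}, \mathcal{GF}^\bot)$ in Lemma 2 and in applying Holm--J{\o}rgensen (Theorem 3.1(b)) in Theorem 2.

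Concretely, I would proceed as follows. First, I would invoke Theorem 2: under the standing hypothesis that $R$ is two-sided noetherian and that the character modules of Gorenstein injective left $R$-modules are Gorenstein flat right $R$-modules, the class $^\bot\mathcal{GI}$ is covering in the category of left $R$-modules. Second, I would apply Theorem 1 of this section, which asserts that over any left noetherian ring $\mathcal{GI}$ is enveloping if and only if $^\bot\mathcal{GI}$ is covering. Since a two-sided noetherian ring is in particular left noetherian, the hypothesis of Theorem 1 is satisfied, and combining the two results yields that $\mathcal{GI}$ is enveloping, as required.

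I do not anticipate any genuine obstacle. One might worry about a subtle mismatch in hypotheses --- Theorem 1 requires only left noetherianness while Theorem 2 required the full two-sided noetherian assumption together with the character-module condition --- but both sets of hypotheses are subsumed by those of the present theorem, so the invocation is legitimate. In short, this theorem should be written as a one- or two-line proof: ``By Theorem 2, $^\bot\mathcal{GI}$ is covering; by Theorem 1, $\mathcal{GI}$ is therefore enveloping.''
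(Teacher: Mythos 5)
Your proposal is correct and is exactly the paper's own argument: the paper's proof of this theorem reads simply ``This follows from Theorem 1 and Theorem 2.'' Nothing further is needed.
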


\begin{proof}
This follows from Theorem 1 and Theorem 2.
\end{proof}

We prove that over the same kind of rings the class of Gorenstein injective modules is also covering.\\

\begin{theorem}
Let $R$ be a two sided noetherian ring such that the character modules of Gorenstein injective modules are Gorenstein flat. Then $(\mathcal{GI}, \mathcal{GF})$ is a duality pair.
\end{theorem}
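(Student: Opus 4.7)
The plan is to verify the two conditions defining a duality pair for $(\mathcal{GI},\mathcal{GF})$. Condition (2), that $\mathcal{GF}$ is closed under direct summands and finite direct sums, is standard: direct sums are handled by concatenating complete flat resolutions, and closure under direct summands is Holm's theorem for right coherent rings (\cite{holm:04:gorenstein}), which applies since $R$ is right noetherian. For condition (1), the forward implication $M\in\mathcal{GI}\Rightarrow M^+\in\mathcal{GF}$ is exactly the standing hypothesis of the theorem, so the real work is the converse: showing that $M^+\in\mathcal{GF}$ forces $M\in\mathcal{GI}$.

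For the converse, I would run an argument parallel to Lemma 2, but using the special $\mathcal{GI}$-\emph{precover} rather than the preenvelope. Given $M$ with $M^+\in\mathcal{GF}$, Lemma 1 produces an exact sequence
\[ 0 \to G \to L \to M \to 0 \]
with $G\in\mathcal{GI}$ and $L\in{}^\bot\mathcal{GI}$. Dualizing yields
\[ 0 \to M^+ \to L^+ \to G^+ \to 0. \]
By the standing hypothesis $G^+\in\mathcal{GF}$, and by assumption $M^+\in\mathcal{GF}$. Since $R$ is right coherent, the class $\mathcal{GF}$ is closed under extensions, so $L^+\in\mathcal{GF}$. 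At the same time, Lemma 2 applied to $L$ gives $L^+\in\mathcal{GF}^\bot$, so $L^+\in\mathcal{GF}\cap\mathcal{GF}^\bot$, which coincides with the class of flat right $R$-modules by \cite{enochs:00:relative}, Theorem 3.2.16 (precisely the step invoked in the proof of Lemma 2).

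Thus $L^+$ is flat, and since $R$ is left noetherian this forces $L$ to be injective (via Lambek together with FP-injective $=$ injective over left noetherian rings), hence in particular Gorenstein injective. The sequence $0 \to G \to L \to M \to 0$ then has both $G$ and $L$ in $\mathcal{GI}$, and since $({}^\bot\mathcal{GI},\mathcal{GI})$ is a hereditary cotorsion pair, $\mathcal{GI}$ is coresolving, so $M\in\mathcal{GI}$ as the cokernel of a monomorphism between Gorenstein injectives. The key design choice — and the main point requiring some care — is to take the precover rather than the preenvelope from Lemma 1, so that $M$ appears as a cokernel matching the coresolving property of $\mathcal{GI}$ and $L^+$ sits in the middle of an extension of Gorenstein flats (rather than as the kernel of a surjection), so that only closure of $\mathcal{GF}$ under extensions is required.
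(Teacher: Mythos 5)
Your proposal is correct, but it takes a genuinely different route from the paper for the key implication $M^+\in\mathcal{GF}\Rightarrow M\in\mathcal{GI}$. The paper uses the other half of the completeness of $({}^\bot\mathcal{GI},\mathcal{GI})$, namely the special preenvelope $0\to K\to G\to L\to 0$ with $G\in\mathcal{GI}$ and $L\in{}^\bot\mathcal{GI}$; dualizing and splitting off $L^+$ (via $Ext^1(K^+,L^+)=0$) shows $L$ is injective, which only yields that $K$ has \emph{finite} Gorenstein injective dimension. The paper then needs a second stage --- Lemma 2.18 of \cite{christensen:06:gorenstein}, producing $0\to B\to H\to K\to 0$ with $id_R H=Gid_R K<\infty$, together with the facts that finite injective dimension of $H$ forces finite flat dimension of $H^+$ and that a Gorenstein flat module of finite flat dimension is flat --- to push the Gorenstein injective dimension down to zero. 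Your choice of the special precover sequence $0\to G\to L\to M\to 0$ places $M$ as a cokernel, so once $L$ is shown to be injective, the coresolving property of $\mathcal{GI}$ (heredity of the cotorsion pair, Lemma 1) finishes immediately; the entire second stage of the paper's argument is bypassed. The price is that you need extension-closure of $\mathcal{GF}$ for right modules over a left coherent ring (\cite{holm:04:gorenstein}, Theorem 3.7) to get $L^+\in\mathcal{GF}$, where the paper needs only closure under direct summands at the corresponding step --- but the paper itself invokes extension-closure of $\mathcal{GF}$ later in its own proof, so you are assuming nothing extra. Two small inaccuracies, neither of which affects validity: the map $L\to M$ is a special ${}^\bot\mathcal{GI}$-precover, not a $\mathcal{GI}$-precover (it is the kernel $G$, not $L$, that lies in $\mathcal{GI}$); and $\mathcal{GF}\cap\mathcal{GF}^\bot$ is \emph{contained in} the class of flat modules (split off the right-hand cosyzygy of a complete flat resolution) rather than equal to it --- only this inclusion is used, and \cite{enochs:00:relative}, Theorem 3.2.16 is the reference for ``$L^+$ flat implies $L$ injective,'' not for the inclusion itself.
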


\begin{proof}
- We prove first that $K \in \mathcal{GI}$ if and only if $K^+ \in \mathcal{GF}$.\\
One implication is a hypothesis we made on the ring.\\
Assume that $K^+ \in \mathcal{GF}$. Since the ring $R$ is left noetherian there exists an exact sequence $0 \rightarrow K \rightarrow G \rightarrow L \rightarrow 0$ with $G$ Gorenstein injective and $L \in ^\bot \mathcal{GI}$. Therefore we have an exact sequence $0 \rightarrow L^+ \rightarrow G^+ \rightarrow K^+ \rightarrow 0$ with $G^+$ Gorenstein flat, and $L^+ \in \mathcal{GF}^\bot$ (by Lemma 2). Then $Ext^1(K^+, L^+)=0$, so $G^+ \simeq L^+ \oplus K^+$, and therefore $L^+$ is a Gorenstein flat right $R$-module. It follows that $L^+ \in \mathcal{GF} \bigcap \mathcal{GF}^ \bot$, so $L^+$ is a flat right $R$-module, and therefore $L$ is injective (by \cite{enochs:00:relative}, Theorem 3.2.16). The exact sequence $0 \rightarrow K \rightarrow G \rightarrow L \rightarrow 0$ with $G$ Gorenstein injective and with $L$ injective, gives that $K$ has finite Gorenstein injective dimension. By \cite{christensen:06:gorenstein} Lemma 2.18, there exists an exact sequence $0 \rightarrow B \rightarrow H \rightarrow K \rightarrow 0$ with $B$ Gorenstein injective and with $id_R H = Gid_R K < \infty$. This gives an exact sequence $0 \rightarrow K^+ \rightarrow H^+ \rightarrow B^+ \rightarrow 0$. Both $B^+$ and $K^+$ are Gorenstein flat modules, so $H^+$ is also Gorenstein flat. Since $id_R H$ is finite it follows that $H^+$ has finite flat dimension (\cite{enochs:00:relative}, Theorem 3.2.19). But a Gorenstein flat module of finite flat dimension is flat (\cite{enochs:00:relative}, Corollary 10.3.4). So $H^+$ is flat and therefore $H$ is injective. Thus $Gid_R K =0$.\\

- We can prove now that $(\mathcal{GI}, \mathcal{GF})$ is a duality pair.\\
The first condition from the definition holds by the above.\\
It is known that the class of Gorenstein flat modules is closed under (finite) direct sums, and under direct summands (\cite{christensen:11:beyond}, Theorem 4.14).
\end{proof}

%We can prove now that over a two sided noetherian ring $R$ with the property that the character modules of Gorenstein injective left $R$-modules are Gorenstein flat, the class of Gorenstein injective left $R$-modules is covering.\\

In the proof of the result about the existence of the Gorenstein injective envelopes we will use the following lemma:\\

\begin{lemma}
A direct sum of modules is a pure submodule of the direct product of the modules.
\end{lemma}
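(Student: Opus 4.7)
The plan is to verify purity using the standard tensor-product characterization: the canonical inclusion $\iota\colon \bigoplus_{i \in I} M_i \hookrightarrow \prod_{i \in I} M_i$ is pure if and only if for every right $R$-module $N$, the induced map $\mathrm{id}_N \otimes \iota$ is injective. I would fix such an $N$ and prove injectivity directly, rather than going through character modules or the $\mathrm{Hom}(F,-)$-criterion with finitely presented $F$.

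The key step is to exploit two canonical maps that exist for any family $\{M_i\}$: first, the standard isomorphism $\alpha\colon N\otimes_R\bigoplus_i M_i \xrightarrow{\sim}\bigoplus_i(N\otimes_R M_i)$, which is just the fact that tensor product commutes with direct sums; second, the canonical homomorphism $\mu\colon N\otimes_R\prod_i M_i\to\prod_i(N\otimes_R M_i)$ sending $n\otimes(m_i)_i$ to $(n\otimes m_i)_i$, which always exists (even though it need not be an isomorphism). I would then verify the identity $\mu\circ(\mathrm{id}_N\otimes\iota)=j\circ\alpha$, where $j\colon\bigoplus_i(N\otimes_R M_i)\hookrightarrow\prod_i(N\otimes_R M_i)$ is the canonical inclusion of the sum into the product. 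This equality is immediate on simple tensors $n\otimes x$ with $x\in\bigoplus_i M_i$, since only finitely many components of $x$ are nonzero, and it extends to all of $N\otimes\bigoplus_i M_i$ by bilinearity.

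Since $j$ is always injective in the category of $R$-modules and $\alpha$ is an isomorphism, the composite $\mu\circ(\mathrm{id}_N\otimes\iota)$ is injective, and hence so is $\mathrm{id}_N\otimes\iota$. As $N$ was arbitrary, $\iota$ is pure. There is essentially no obstacle here: once the commutative diagram is identified, the argument is purely formal and does not use any special hypothesis on the modules or the ring. In particular, no finiteness assumption, no coherence of $R$, and no flatness of $N$ is needed.
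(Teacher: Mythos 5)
Your argument is correct: the factorization $\mu\circ(\mathrm{id}_N\otimes\iota)=j\circ\alpha$ does hold (it suffices to check it on simple tensors $n\otimes x$ with $x$ of finite support, as you say), and since $j\circ\alpha$ is injective, so is $\mathrm{id}_N\otimes\iota$; purity follows from the tensor characterization. This is, however, a genuinely different route from the paper's. The paper writes $\bigoplus_{i\in I}X_i$ as the direct limit of its finite partial sums $Y_j=\bigoplus_{i\le j}X_i$, observes that each $Y_j$ is a direct summand (hence a pure submodule) of the product $Y=\prod_i X_i$, so that $0\to A\otimes Y_j\to A\otimes Y$ is exact for every $A$, and then passes to the limit using exactness of filtered colimits and the fact that tensor commutes with direct limits. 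Your proof replaces this limit argument with a single commutative square built from the canonical (generally non-injective) comparison map $\mu\colon N\otimes\prod_iM_i\to\prod_i(N\otimes M_i)$; the point is that $\mu$ need not be injective globally, only that its restriction along $\mathrm{id}_N\otimes\iota$ is, and this is supplied by the isomorphism $\alpha$. What your approach buys is that it is entirely diagrammatic, avoids any appeal to exactness of direct limits, and makes visible exactly which canonical maps are involved; the paper's approach is closer in spirit to the general principle that a directed union of pure submodules is pure, which is a reusable fact in its own right. Either way, no hypothesis on the ring or the modules is needed, as you note.
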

\begin{proof}
Let $(X_i)_{i \in I}$ be a family of $R$-modules. The direct sum $\oplus_{i \in I} X_i$ of the family is the direct limit of the finite direct sums $Y_j= \oplus_{i \le j} X_i$ of these modules. Any finite direct sum $Y_j$ is a direct summand of the direct product $Y = \prod X_i$, so it is a pure submodule of $Y$. Therefore the sequence $ 0 \rightarrow A \otimes Y_j \rightarrow A \otimes Y $ is exact, for any $R$-module $A$. Then $0 \rightarrow \underrightarrow{lim} (A \otimes Y_j) \rightarrow \underrightarrow{lim} (A \otimes Y)$ is exact.
Since the tensor product commutes with direct limits we have an exact sequence $0 \rightarrow A \otimes (\underrightarrow{lim} Y_j) \rightarrow A \otimes Y$. So $0 \rightarrow A \otimes (\oplus_{i \in I} X_i) \rightarrow A \otimes \prod X_i $ is exact for any $R$-module $A$.
\end{proof}

\begin{theorem}
Let $R$ be a two sided noetherian ring such that the character modules of Gorenstein injective left $R$-modules are Gorenstein flat right $R$-modules. Then the class of Gorenstein injective modules is covering in $R-Mod$.
\end{theorem}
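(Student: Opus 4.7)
The plan is to apply part (b) of the Holm--Jørgensen theorem quoted in the preliminaries to the duality pair $(\mathcal{GI}, \mathcal{GF})$ established in Theorem 3. Since that theorem already gives us the duality pair, what remains is to verify that it is coproduct-closed, i.e. that $\mathcal{GI}$ is closed under arbitrary direct sums in $R\text{-Mod}$. Once this is in hand, Holm--Jørgensen (b) delivers the covering property immediately.

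To see that $\mathcal{GI}$ is closed under direct sums, I would first observe that it is closed under arbitrary direct products. Indeed, let $(G_i)_{i\in I}$ be a family of Gorenstein injective left $R$-modules, and let $E_i^{\bullet}$ be an exact complex of injectives that remains exact under $Hom(Inj,-)$ with $G_i$ appearing as the relevant kernel. Since $R$ is left noetherian, products of injectives are injective, so $\prod_i E_i^{\bullet}$ is a complex of injectives; it is exact because direct products are exact in module categories; and the $Hom(Inj,-)$ exactness passes through the product because $Hom(I,-)$ commutes with products. Taking kernels at the appropriate spot yields $\prod_i G_i$, which is therefore Gorenstein injective.

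Next, by Lemma 3, the canonical inclusion $\bigoplus_{i\in I} G_i \hookrightarrow \prod_{i\in I} G_i$ is pure. By Theorem 3 the pair $(\mathcal{GI}, \mathcal{GF})$ is a duality pair, and the first assertion of the Holm--Jørgensen theorem quoted in the preliminaries says that the left class of any duality pair is closed under pure submodules. Applying this to the pure inclusion above gives $\bigoplus_{i\in I} G_i \in \mathcal{GI}$, establishing closure of $\mathcal{GI}$ under direct sums.

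Thus $(\mathcal{GI}, \mathcal{GF})$ is a coproduct-closed duality pair, and part (b) of the Holm--Jørgensen theorem concludes that $\mathcal{GI}$ is covering in $R\text{-Mod}$. The main conceptual step is the direct sum closure, which does not seem to follow from the definition of Gorenstein injective directly; the trick is to route the argument through the product (easy because of the noetherian hypothesis) and then use purity together with the pure-submodule closure built into the notion of a duality pair.
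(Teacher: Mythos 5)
Your proposal is correct and follows essentially the same route as the paper: use the duality pair $(\mathcal{GI},\mathcal{GF})$, note that $\mathcal{GI}$ is closed under products and that a direct sum is a pure submodule of the product (Lemma 3), invoke closure of the left class of a duality pair under pure submodules to get coproduct-closure, and conclude by Holm--J{\o}rgensen Theorem 3.1(b). One tiny remark: products of injective modules are injective over any ring (the noetherian hypothesis is only needed for direct sums of injectives), so your product-closure argument does not actually use that hypothesis.
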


\begin{proof}
By Theorem 4, $(\mathcal{GI}, \mathcal{GF})$ is a duality pair. Then by \cite{holm:09:duality} Theorem 3.1, the class of Gorenstein injective modules is closed under pure submodules. Since $\mathcal{GI}$ is closed under direct products and every direct sum is a pure submodule of a direct product (Lemma 3), it follows that $\mathcal{GI}$ is closed under arbitrary direct sums. Another application of \cite{holm:09:duality}, Theorem 3.1 gives that $\mathcal{GI}$ is covering.
\end{proof}

%\begin{remark}
%It is known that a precovering class of $R$-modules $\mathcal{C}$ that is also closed under direct limits is in fact covering. But it is an open question whether or not $\mathcal{C}$ being a covering class implies that $\mathcal{C}$ is closed under direct limits. Our results show that the answer is affirmative when $\mathcal{C}$ is the class $^\bot \mathcal{GI}$ over a two sided noetherian ring such the the character modules of Gorenstein injective modules are Gorenstein flat.
%\end{remark}

%\begin{proof}
%Assume that $^\bot \mathcal{GI}$ is covering.
%\end{proof}

The following is an equivalent description of the condition that $(\mathcal{GI}, \mathcal{GF})$ is a duality pair.\\

\begin{proposition}
Let $R$ be a two sided noetherian ring such the character modules of Gorenstein injective modules are Gorenstein flat. The following are equivalent:\\
1. $(\mathcal{GI}, \mathcal{GF})$ is a duality pair;\\
2. The class of Gorenstein injective modules, $\mathcal{GI}$, is closed under pure submodules.
\end{proposition}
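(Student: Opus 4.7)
The plan is to prove $(1)\Rightarrow(2)$ by simply invoking the Holm–J{\o}rgensen theorem quoted in the preliminaries (\cite{holm:09:duality}, Theorem 3.1), which says that the left-hand class of any duality pair is closed under pure submodules. Since $\mathcal{GI}$ plays the role of $\mathsf{M}$ in the pair $(\mathcal{GI},\mathcal{GF})$, the conclusion that $\mathcal{GI}$ is closed under pure submodules is immediate once $(1)$ is granted.

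The real content is in the implication $(2)\Rightarrow(1)$. The strategy is to verify the two defining conditions of a duality pair for $(\mathcal{GI},\mathcal{GF})$. Condition (2) of Definition 7, the closure of $\mathcal{GF}$ under direct summands and finite direct sums, is standard (see \cite{christensen:11:beyond}, Theorem 4.14). For condition (1), the forward implication $M\in\mathcal{GI}\Rightarrow M^+\in\mathcal{GF}$ is exactly the standing hypothesis on the ring $R$. So the only genuine work is proving the reverse implication $M^+\in\mathcal{GF}\Rightarrow M\in\mathcal{GI}$.

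The key observation I would use for this reverse implication is that every module $M$ embeds as a pure submodule of its double character module $M^{++}$ (a classical fact about the unit of the adjunction). Since $R$ is two sided noetherian it is in particular right coherent, so by \cite{holm:04:gorenstein}, Theorem 3.6, the character module of a Gorenstein flat right $R$-module is a Gorenstein injective left $R$-module; applied to $M^+\in\mathcal{GF}$ this gives $M^{++}\in\mathcal{GI}$. Hypothesis $(2)$ then says $\mathcal{GI}$ is closed under pure submodules, so the pure embedding $M\hookrightarrow M^{++}$ forces $M\in\mathcal{GI}$, as required.

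I do not anticipate any serious obstacle here: both directions reduce to quoting results already available, once one notices that the canonical pure embedding $M\hookrightarrow M^{++}$ is what bridges the gap. The mildly interesting point, worth highlighting in the write-up, is that the hard half of the biconditional $M\in\mathcal{GI}\Leftrightarrow M^+\in\mathcal{GF}$---which required the delicate argument in the proof of Theorem 4 using \cite{christensen:06:gorenstein} and finite Gorenstein injective dimension---is obtained here for free, purely from the assumption that $\mathcal{GI}$ is closed under pure submodules.
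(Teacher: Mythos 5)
Your proof is correct, and the interesting half, $(2)\Rightarrow(1)$, goes by a genuinely different route than the paper's. The paper reuses its earlier machinery: it takes the special exact sequence $0\rightarrow K\rightarrow G\rightarrow L\rightarrow 0$ with $G$ Gorenstein injective and $L\in {}^{\bot}\mathcal{GI}$ coming from the complete cotorsion pair of Lemma 1, then invokes the computation from the proof of Theorem 4 showing $G^{+}\simeq L^{+}\oplus K^{+}$, concludes that the sequence is pure exact, and applies hypothesis $(2)$ to the pure submodule $K$ of $G$. You instead use the canonical pure embedding $M\hookrightarrow M^{++}$ together with the right-module version of Holm's Theorem 3.6 (valid since a two sided noetherian ring is left coherent) to see that $M^{++}=(M^{+})^{+}$ is Gorenstein injective whenever $M^{+}$ is Gorenstein flat, and then apply $(2)$ directly. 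Both arguments are sound; yours is shorter and logically lighter, since it bypasses the completeness of $({}^{\bot}\mathcal{GI},\mathcal{GI})$, Lemma 2, and the splitting argument of Theorem 4 entirely, at the cost of importing the standard fact that $\delta_M\colon M\rightarrow M^{++}$ is a pure monomorphism (which the paper never states but which is classical, e.g.\ via the retraction $M^{+++}\rightarrow M^{+}$). The paper's version has the mild advantage of staying entirely within facts already established in the text. Your closing observation, that the delicate finite-Gorenstein-injective-dimension argument of Theorem 4 is replaced for free by hypothesis $(2)$, is accurate and matches the point of the proposition.
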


\begin{proof}
1. $\Rightarrow$ 2. follows from \cite{holm:09:duality}, Theorem  3.1.\\
2. $\Rightarrow$ 1. Assume that $K^+$ is a right Gorenstein flat module.\\
The ring $R$ is noetherian so there is an exact sequence $0 \rightarrow K \rightarrow G \rightarrow L \rightarrow 0$ with $G$ Gorenstein injective and with $L \in ^\bot \mathcal{GI}$. %This gives an exact sequence $ 0 \rightarrow L^+ \rightarrow G^+ \rightarrow K^+ \rightarrow 0$ with $G^+$ Gorenstein flat and with $L^+ \in \mathcal{GF}^\bot$ (by Lemma 2). It follows that $Ext^1(K^+, L^+)=0$, and therefore $G^+ \simeq L^+ \oplus K^+$. This means that
By the proof of Theorem 4, $G^+ \simeq L^+ \oplus K^+$, so the sequence $ 0 \rightarrow K \rightarrow G \rightarrow L \rightarrow 0$ is pure exact. Since $\mathcal{GI}$ is closed under pure submodules it follows that $K$ is Gorenstein injective. Thus $K$ is Gorenstein injective if and only if its character module, $K^+$, is Gorenstein flat.\\
Since the class of Gorenstein flat modules is closed under direct sums and under direct summands, it follows that $(\mathcal{GI}, \mathcal{GF})$ is a duality pair.
\end{proof}

Our results hold over two sided noetherian rings with the property that the character modules of left Gorenstein injective modules are Gorenstein flat right $R$-modules. It is known that this class of rings includes the Gorenstein rings (\cite{enochs:00:relative}), as well as the commutative noetherian rings with dualizing complexes (\cite{holm:09:duality}). But these are not the only rings with this property. We proved (\cite{iacob:14:gorenstein.flat.preenvelops}) that every two sided noetherian ring $R$ with $i.d._{R^{op}} R < \infty$ has the desired property. %Then, by Theorem 3 and Theorem 5, we obtain:\\

\begin{theorem}(\cite{iacob:14:gorenstein.flat.preenvelops}, Theorem 2)
Let $R$ be a two sided noetherian ring such that $i.d._{R^{op}} R \le n$ for some positive integer n. Then the character modules of Gorenstein injective left $R$-modules are Gorenstein flat right $R$-modules.
\end{theorem}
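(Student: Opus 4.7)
The plan is to reduce to the strongly Gorenstein injective case, dualize to produce a candidate complete flat resolution of the character module, and then bound the flat dimension of injective left $R$-modules using $i.d._{R^{op}}R\le n$ to verify the required tensor-exactness.

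Let $G$ be Gorenstein injective. By \cite{bennis:07:stronglygorenstein}, $G$ is a direct summand of a strongly Gorenstein injective module $M$, and since Gorenstein flat right $R$-modules are closed under direct summands (\cite{christensen:11:beyond}, Theorem 4.14), it suffices to prove $M^+$ is Gorenstein flat. Fix a periodic witness $\cdots\to E\xrightarrow{f}E\xrightarrow{f}E\to\cdots$ with $E$ injective and $M=\Ker f$, so that $0\to M\to E\to M\to 0$ is exact. Applying $(-)^+$ yields an exact sequence $0\to M^+\to E^+\to M^+\to 0$; since $R$ is left noetherian, the character module of an injective left $R$-module is a flat right $R$-module (Cheatham--Stone), so $E^+$ is flat. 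Splicing produces an exact periodic complex $\cdots\to E^+\xrightarrow{f^+}E^+\xrightarrow{f^+}E^+\to\cdots$ of flat right $R$-modules with $M^+$ as the common kernel and image, a candidate complete flat resolution of $M^+$.

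The key auxiliary fact is that every injective left $R$-module $I$ satisfies $f.d._R I\le n$. From the hypothesis, pick an injective coresolution $0\to R\to J^0\to\cdots\to J^n\to 0$ of $R$ as a right $R$-module. Applying $(-)^+$ and invoking Cheatham--Stone on the opposite side (using that $R$ is right noetherian, so each $(J^i)^+$ is flat over $R$) produces a flat resolution of $R^+$ of length at most $n$, hence $f.d._R R^+\le n$. Since $\Hom_R(-,R^+)\cong(-)^+$ is faithfully exact, $R^+$ is an injective cogenerator of left $R$-modules, so any injective $I$ is a direct summand of some power $(R^+)^X$. Because $R$ is left coherent, products of flat left $R$-modules are flat and the flat dimension of a product is bounded by the supremum of the factors', giving $f.d._R(R^+)^X\le n$ and therefore $f.d._R I\le n$.

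Fix an injective left $R$-module $I$. The long exact Tor sequence from $0\to M^+\to E^+\to M^+\to 0$ combined with $E^+$ flat yields $\Tor{j}{R}(M^+,I)\cong\Tor{j+1}{R}(M^+,I)$ for all $j\ge 1$, so all these Tor groups agree with $\Tor{1}{R}(M^+,I)$. The auxiliary fact gives $\Tor{n+1}{R}(M^+,I)=0$, hence $\Tor{j}{R}(M^+,I)=0$ for every $j\ge 1$. A short calculation shows the homology of the periodic tensor complex $\cdots\to E^+\otimes_R I\to E^+\otimes_R I\to\cdots$ at each position equals $\Tor{1}{R}(M^+,I)=0$, so this complex is exact. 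This is precisely the $Inj\otimes-$ exactness required, and it proves that $M^+$, and therefore $G^+$, is Gorenstein flat. The main obstacle is the auxiliary fact bounding the flat dimension of injective left $R$-modules by $n$: this is where $i.d._{R^{op}}R\le n$ enters essentially, through Cheatham--Stone on both sides, the injective cogenerator property of $R^+$, and the preservation of flatness (and flat dimension) under products over a coherent ring.
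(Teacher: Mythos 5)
Your argument is correct and complete. Note, though, that this paper does not actually prove the statement: it is quoted verbatim from the author's separate paper \emph{Gorenstein flat preenvelopes} (Theorem 2 there), so there is no internal proof to compare against; your write-up supplies a self-contained argument of exactly the kind one would expect. Each step checks out: the reduction to a strongly Gorenstein injective $M$ via Bennis--Mahdou is legitimate (and the closure of $\mathcal{GF}$ under direct summands needs coherence, which two-sided noetherian supplies); dualizing $0\to M\to E\to M\to 0$ gives the periodic flat complex with $E^+$ flat by the Cheatham--Stone/Enochs--Jenda characterization over a left noetherian ring; and the dimension shift $Tor_{j+1}^R(M^+,I)\cong Tor_j^R(M^+,I)$ together with $f.d._R I\le n$ kills all the Tor groups and hence gives the required $Inj\otimes -$ exactness. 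Your auxiliary fact ($f.d._R I\le n$ for every injective left $I$) is precisely Iwanaga's identity $i.d._{R^{op}}R=\sup\{f.d._R I : I \text{ injective}\}$, which this paper itself invokes in Proposition 3, so you could simply cite it rather than re-derive it; your derivation is nonetheless sound. Two microscopic quibbles: the relevant Chase-type statement is that products of flat \emph{left} modules are flat when $R$ is \emph{right} coherent (you wrote left coherent), immaterial here since $R$ is noetherian on both sides; and the clause ``flat dimension of a product is bounded by the supremum of the factors''' deserves the one-line justification that the product of truncated flat resolutions is again a flat resolution with flat $n$-th syzygy.
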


Then by Theorem 3, Theorem 5 and Theorem 6 we obtain:\\

\begin{theorem}
Let $R$ be a two sided noetherian ring such that $i.d._{R^{op}} R < \infty$. Then the class of left Gorenstein injective $R$-modules is both enveloping and covering in $R-Mod$.
\end{theorem}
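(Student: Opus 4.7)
The plan is essentially to chain together the three results cited in the paragraph immediately preceding the statement, so the proof will be a short synthesis rather than new work. The hypothesis in force is that $R$ is two sided noetherian with $i.d._{R^{op}} R < \infty$, and the task is to conclude that $\mathcal{GI}$ is both enveloping and covering in $R\text{-Mod}$.

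First I would invoke Theorem 6 to translate the finite injective dimension hypothesis into the hypothesis used in Theorems 3 and 5. That is, since $i.d._{R^{op}} R \le n$ for some integer $n$, Theorem 6 guarantees that for every Gorenstein injective left $R$-module $M$ the character module $M^+$ is a Gorenstein flat right $R$-module. This is exactly the hypothesis imposed on $R$ in the statements of Theorem 3 and Theorem 5.

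Having verified the hypothesis, the conclusion splits in two halves. Applying Theorem 3 to $R$ gives that $\mathcal{GI}$ is enveloping in $R\text{-Mod}$; applying Theorem 5 to the same ring gives that $\mathcal{GI}$ is covering in $R\text{-Mod}$. Together these two applications yield the claim.

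The only potential obstacle is really bookkeeping: one must confirm that the hypotheses of Theorems 3 and 5 are met on both sides, i.e.\ that $R$ is two sided noetherian (which is assumed) and that character modules of Gorenstein injective left $R$-modules are Gorenstein flat right $R$-modules (which is supplied by Theorem 6). Since no further verification is needed, the proof reduces to a single sentence citing Theorems 6, 3, and 5 in that order.
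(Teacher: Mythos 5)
Your proposal is correct and matches the paper exactly: the paper derives this theorem by citing Theorem 6 to verify the character-module hypothesis and then applying Theorems 3 and 5 for the enveloping and covering conclusions respectively. No further comment is needed.
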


\section{When is every strongly cotorsion module Gorenstein injective?}

%We consider a commutative noetherian ring $R$ of finite Krull dimension $d$, and such that the injective envelope of $R$, $\mathcal{E}(R)$ is flat. It is known that over such a ring any Gorenstein injective module is a strongly cotorsion module (\cite{salarian:03: strongly cotorsion}). We prove that the class of Gorenstein injective modules coincides with that of strongly cotorsion modules if and only if $R$ is in fact a Gorenstein ring.

We recall first the following:\\

\begin{definition}(\cite{enochs:12:injective envelopes}, Definition 2.6) % Let R be a ring and n a positive integer.\\
%An $R$-module $M$ is called n-cotorsion if $Ext^1(X,M) = 0$ for any $X \in R-Mod$ with flat dimension at most $n$.
 A module $_RM$ is called strongly cotorsion if $Ext^1(F,M) = 0$ for every module $F$ of finite flat dimension.
\end{definition}

It is known (see for example \cite{yan:10: strongly cotorsion}, Lemma 2.2) that $M$ is a strongly cotorsion module if and only if $Ext^i(F,M) = 0$ for all $i \ge 1$, for any module $F$ of finite flat dimension.

We will use the notation $\mathcal{SC}$ for the class of strongly cotorsion modules, and $\F$ for the class of modules of finite flat dimension.

We recall (\cite{enochs:05:dualizing.modules}) that a ring $R$ is said to be left $n$-perfect if every left $R$-module of finite flat dimension has projective dimension less than or equal to $n$.\\
Left perfect rings, commutative noetherian rings of finite Krull dimension, the universal enveloping algebra $\mathcal{U}(g)$ of a Lie algebra of dimension $n$, and $n$-Gorenstein rings are all examples of left $n$-perfect rings (by \cite{enochs:05:dualizing.modules}). %Also, if $R$ is left $n$-perfect then $R[X}$ and $R[[X]]$ are left $k$

\begin{proposition}
%Let $R$ be a two sided noetherian ring.
 If $R$ is left $n$-perfect then every Gorenstein injective left $R$-module is strongly cotorsion.
\end{proposition}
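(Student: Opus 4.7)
The plan is to prove that for a Gorenstein injective module $G$ and a module $F$ of finite flat dimension, $Ext^i(F, G) = 0$ for every $i \geq 1$; by the equivalent characterization noted just after Definition 7, this gives $G \in \mathcal{SC}$. The $n$-perfect hypothesis enters only to upgrade ``finite flat dimension'' to ``$pd_R F \leq n$''; after that, the argument is pure homological algebra against the complete injective resolution witnessing Gorenstein injectivity of $G$.

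The main device I will use is the complete injective resolution
$$\ldots \to E_2 \to E_1 \to E_0 \to E_{-1} \to E_{-2} \to \ldots$$
supplied by Definition 1, with $G = \ker(E_0 \to E_{-1})$. Setting $G^0 = G$ and, for $k \geq 1$, $G^k = \ker(E_k \to E_{k-1})$, each $G^k$ is again Gorenstein injective, since it sits in the same complete resolution merely re-indexed, so both exactness and $Hom(Inj,-)$-exactness are inherited. In addition, for each $k \geq 0$ we obtain a short exact sequence
$$0 \to G^{k+1} \to E_{k+1} \to G^k \to 0$$
with $E_{k+1}$ injective.

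Applying $Hom_R(F, -)$ to each of these and using $Ext^j(-, E_{k+1}) = 0$ for $j \geq 1$ gives the dimension-shift isomorphism $Ext^i(F, G^k) \cong Ext^{i+1}(F, G^{k+1})$ for every $i \geq 1$. Iterating $n$ times yields $Ext^i(F, G) \cong Ext^{i+n}(F, G^n)$, whose right-hand side vanishes for every $i \geq 1$ because $pd_R F \leq n$. Hence $Ext^i(F, G) = 0$ for all $i \geq 1$, as required.

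I do not anticipate a real obstacle. The one point worth a brief verification is that each syzygy $G^k$ really is Gorenstein injective, which is however immediate from re-indexing the original complete resolution; everything else is standard dimension shifting along an injective coresolution of the codomain, combined with the single application of the $n$-perfect hypothesis to bound $pd_R F$.
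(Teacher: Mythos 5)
Your proof is correct, but it takes a different route from the paper's. The paper first reduces to the case of a \emph{strongly} Gorenstein injective module $M$, using the Bennis--Mahdou theorem that every Gorenstein injective module is a direct summand of a strongly Gorenstein injective one; for such an $M$ the periodic sequence $0 \rightarrow M \rightarrow E \rightarrow M \rightarrow 0$ gives $Ext^i(-,M) \simeq Ext^1(-,M)$ for all $i \geq 1$, so the vanishing of $Ext^i(X,M)$ for $i > n$ (from $pd_R X \leq n$) propagates down to $i=1$, and the direct-summand step transfers the conclusion to an arbitrary Gorenstein injective module. You instead work directly with the complete injective resolution of an arbitrary Gorenstein injective $G$, dimension-shifting upward through the kernels $G^k$ of the left half of the resolution to get $Ext^i(F,G) \cong Ext^{i+n}(F,G^n) = 0$. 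Both arguments hinge on the same two ingredients --- the $n$-perfect hypothesis bounding $pd_R F$, and a shift that converts high-degree vanishing into degree-one vanishing --- but yours avoids the citation to the strongly Gorenstein injective decomposition entirely and needs nothing beyond the defining complete resolution; note that you do not even need the (true) observation that the $G^k$ are Gorenstein injective, since the dimension shift only uses that the middle terms $E_{k+1}$ are injective. The paper's version is shorter once the decomposition theorem is granted; yours is more self-contained.
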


\begin{proof}
Let $M$ be a strongly Gorenstein injective left $R$-module. Then there is an exact sequence $0 \rightarrow M \rightarrow E \rightarrow M \rightarrow 0$ with $_RE$ an injective module. It follows that $Ext^i(-,M) \simeq Ext^1(-,M)$ for all $i \ge 1$.\\
Let $_RX$ be a module of finite flat dimension. Since $R$ is left $n$-perfect, the projective dimension of $X$ is less than or equal to $n$. So $Ext^i(X,M) = 0$ for all $i > n$. Then by the above we have that $Ext^i(X,M) = 0$ for all $i \ge 1$. Thus $M \in \mathcal{SC}$.\\
If $M'$ is a Gorenstein injective $R$-module then there exists a strongly Gorenstein injective module $M$ such that $M \simeq M' \oplus M"$ (by \cite{bennis:07:stronglygorenstein}). It follows that $Ext^1(X,M')=0$ for any $X \in \F$. So $M'$ is strongly cotorsion.
\end{proof}

\begin{proposition}
Let $R$ be a two sided noetherian ring such that $R$ is left $n$-perfect. If the class of strongly cotorsion modules coincides with that of Gorenstein injective modules then $i.d._{R^{op}} R < \infty$. 
\end{proposition}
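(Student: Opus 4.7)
My plan is to show that every injective left $R$-module has finite flat dimension, and then to apply this to the character module $(R_R)^+$, which is injective as a left $R$-module since $R_R$ is flat. Via Ishikawa's duality $id_{R^{op}} M = fd_R M^+$ for right $R$-modules $M$, this bound will give $id_{R^{op}} R < \infty$.

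First I would translate the hypothesis $\mathcal{SC} = \mathcal{GI}$ into the equality $\F = {}^\bot \mathcal{GI}$ of the left halves of two cotorsion pairs. By Lemma~1, $({}^\bot \mathcal{GI}, \mathcal{GI})$ is a complete hereditary cotorsion pair. The pair $(\F, \mathcal{SC})$ is likewise a complete hereditary cotorsion pair: $\F$ is closed under transfinite extensions, so Eklof's lemma yields $\F = {}^\bot(\F^\bot) = {}^\bot \mathcal{SC}$. The left $n$-perfect hypothesis further identifies $\F$ with the class of left $R$-modules of projective dimension at most $n$. Since both cotorsion pairs have the same right class, their left classes must agree, so $\F = {}^\bot \mathcal{GI}$.

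Next I would show that every injective left $R$-module $J$ lies in ${}^\bot \mathcal{GI}$. Given $G \in \mathcal{GI}$, Definition~1 supplies an exact complex of injectives $\cdots \to E_1 \to E_0 \to E_{-1} \to \cdots$ with $G = \Ker(E_0 \to E_{-1})$ that remains exact after applying $\Hom(J,-)$. Its right-hand portion $0 \to G \to E_0 \to E_{-1} \to \cdots$ is an injective coresolution of $G$, and the cited exactness forces $\ext{R}{i}(J, G) = 0$ for every $i \ge 1$; hence $J \in {}^\bot \mathcal{GI}$.

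Combining the two steps, every injective left $R$-module belongs to $\F$ and therefore has flat dimension at most $n$. Applied to $(R_R)^+$ this gives $fd_R (R_R)^+ \le n$, and Ishikawa's theorem then yields $id_{R^{op}} R \le n < \infty$. I do not anticipate a substantive obstacle: the argument is essentially formal once both cotorsion-pair structures and the $\Hom(\textit{Inj},-)$-exactness clause in the definition of Gorenstein injective are in hand.
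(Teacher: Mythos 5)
Your proposal follows essentially the same route as the paper: identify $\F$ with ${}^\bot\mathcal{GI}$ by comparing the two cotorsion pairs, observe that injectives lie in ${}^\bot\mathcal{GI}$ so every injective left module has flat dimension at most $n$, and convert that bound into $i.d._{R^{op}}R\le n$. The paper finishes with Iwanaga's formula $i.d._{R^{op}}R=\sup\{f.d.\,I : {}_RI \text{ injective}\}$ for right noetherian rings, while you apply the bound to the single injective module $(R_R)^+$ and invoke Ishikawa duality; both are standard and equivalent here, and your explicit verification that $\ext{R}{i}(J,G)=0$ for $J$ injective and $G$ Gorenstein injective is a correct unwinding of a fact the paper takes for granted.

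The one step you should tighten is the claim that ``$\F$ is closed under transfinite extensions, so Eklof's lemma yields $\F={}^\bot(\F^\bot)$.'' Eklof's lemma goes the other way: it says that any class of the form ${}^\bot\mathcal{C}$ is closed under transfinite extensions; closure under transfinite extensions alone does not force a class to equal its double orthogonal. The inclusion you actually need, ${}^\bot\mathcal{SC}\subseteq\F$, is precisely the nontrivial content of the assertion that $(\F,\mathcal{SC})$ is a cotorsion pair. The paper gets this by citing Yan (Proposition 2.14); alternatively, your observation that left $n$-perfectness gives $\F=\mathcal{P}_n$ lets you quote the known fact that $(\mathcal{P}_n,\mathcal{P}_n^\bot)$ is a complete cotorsion pair cogenerated by a set, whence any $M\in{}^\bot(\mathcal{P}_n^\bot)$ is a direct summand of a module in $\mathcal{P}_n$ by the usual splitting argument. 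With that reference (or argument) supplied, your proof is complete.
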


\begin{proof}
Since $R$ is left noetherian, $(^\bot \mathcal{GI}, \mathcal{GI})$ is a complete hereditary cotorsion pair. By \cite{yan:10: strongly cotorsion}, Proposition 2.14, $(\mathcal{F}, \mathcal{SC})$ is also a cotorsion pair. \\ % (where $\mathcal{F}$ is the class of modules of finite flat dimension, and $\mathcal{SC}$ denots the class of strongly cotorsion modules).\\
By our assumption, $\mathcal{SC}$ is the class of Gorenstein injective modules. It follows that $^\bot \mathcal{SC} = ^ \bot \mathcal{GI}$, that is, $\mathcal{F} = ^\bot \mathcal{GI}$. Since $Inj \subseteq ^\bot \mathcal{GI}$, it follows that every injective module has finite flat dimension.\\
Since $R$ is right noetherian we have that $i.d._{R^{op}} R = sup \{f.d. I | _RI \in Inj \}$ (by \cite{ywanaga:80:rings.self.injective}). It follows that $ i.d._{R^{op}} R  \le n$.
\end{proof}

In particular, any commutative noetherian ring of Krull dimension $n < \infty$ is left $n$-perfect. Over such a ring the condition $i.d._{R^{op}} R < \infty$ is equivalent with $R$ being a Gorenstein ring. So using Proposition 3 we obtain the following result:\\

\begin{theorem}
Let $R$ be a commutative noetherian ring of finite Krull dimension. %such that $\mathcal{E}(R)$ is flat. 
The following are equivalent:\\
%1. Every injective module has finite flat dimension.\\
1) The class of Gorenstein injective modules coincides with that of strongly cotorsion modules.\\
2) $R$ is a Gorenstein ring.
\end{theorem}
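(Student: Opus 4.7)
The plan is to treat the two implications separately. The direction $(1)\Rightarrow(2)$ is essentially a restatement of Proposition~3 in the commutative setting: since $R$ is commutative noetherian of finite Krull dimension, it is left $n$-perfect (as noted in the paragraph before Proposition~2), so hypothesis (1) is exactly the hypothesis of Proposition~3. That proposition yields $i.d._{R^{op}} R<\infty$; commutativity lets us drop the ``$op$'', and for a commutative noetherian ring finite self-injective dimension is the definition of being Gorenstein.

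For $(2)\Rightarrow(1)$: since $R$ is Gorenstein of finite Krull dimension $n$, $R$ is $n$-perfect (and in fact $n$-Gorenstein), so Proposition~2 immediately gives $\mathcal{GI}\subseteq\mathcal{SC}$. For the reverse inclusion I would use the complete cotorsion pair $({}^\bot\mathcal{GI},\mathcal{GI})$ of Lemma~1: given $M\in\mathcal{SC}$, it produces an exact sequence $0\to M\to G\to L\to 0$ with $G\in\mathcal{GI}$ and $L\in{}^\bot\mathcal{GI}$. Once $L\in\mathcal{F}$ is established, the strong cotorsion hypothesis forces $Ext^1(L,M)=0$, the sequence splits, and $M$ becomes a direct summand of the Gorenstein injective module $G$, hence itself Gorenstein injective.

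The main obstacle is therefore to establish ${}^\bot\mathcal{GI}\subseteq\mathcal{F}$ over an $n$-Gorenstein ring, and my plan is a dimension-shifting argument. Fix $L\in{}^\bot\mathcal{GI}$ and any $R$-module $N$. Since every module over an $n$-Gorenstein ring has Gorenstein injective dimension at most $n$, the module $N$ fits into an exact sequence $0\to N\to E^0\to E^1\to\cdots\to E^{n-1}\to G'\to 0$ with each $E^i$ injective and $G'$ Gorenstein injective. Iterating the long exact sequence of $Ext$ and using $Ext^i(L,E^j)=0$ for $i\geq 1$ yields $Ext^{n+1}(L,N)\cong Ext^1(L,G')$, which vanishes because $L\in{}^\bot\mathcal{GI}$. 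Since this holds for every $N$, the $n$-th syzygy of $L$ in a projective resolution of $L$ is projective, so $L$ has finite projective dimension; over a Gorenstein ring finite projective dimension equals finite flat dimension, and hence $L\in\mathcal{F}$.
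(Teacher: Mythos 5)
Your proof is correct. The direction $(1)\Rightarrow(2)$ is exactly the paper's: invoke Proposition~3 and drop the ``$op$'' by commutativity. For $(2)\Rightarrow(1)$ you take a genuinely different route. The paper's argument is a one-liner at the level of cotorsion pairs: over a Gorenstein ring ${}^\bot\mathcal{GI}$ is known (citing Enochs--Jenda) to be exactly the class of modules of finite projective dimension, which coincides with $\mathcal{F}$, and then both inclusions fall out at once from $\mathcal{GI}=({}^\bot\mathcal{GI})^\bot=\mathcal{F}^\bot=\mathcal{SC}$. You instead prove the two inclusions separately: $\mathcal{GI}\subseteq\mathcal{SC}$ from Proposition~2, and $\mathcal{SC}\subseteq\mathcal{GI}$ by combining the completeness of $({}^\bot\mathcal{GI},\mathcal{GI})$ with a splitting argument, which reduces everything to the single inclusion ${}^\bot\mathcal{GI}\subseteq\mathcal{F}$; that inclusion you establish by dimension shifting against a coresolution $0\to N\to E^0\to\cdots\to E^{n-1}\to G'\to 0$, using that every module over an $n$-Gorenstein ring has Gorenstein injective dimension at most $n$. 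Your version is more self-contained (it needs only the bound on Gorenstein injective dimension rather than the full identification of ${}^\bot\mathcal{GI}$, and it only requires one inclusion between ${}^\bot\mathcal{GI}$ and $\mathcal{F}$ rather than equality), at the cost of being longer; the paper's version is shorter but leans entirely on the cited structure theory of Gorenstein rings. One cosmetic remark: your final step ``finite projective dimension equals finite flat dimension over a Gorenstein ring'' is stronger than what you need --- finite projective dimension implies finite flat dimension over any ring, and that trivial direction suffices to conclude $L\in\mathcal{F}$.
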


\begin{proof}

%Also, by Lemma 3, $X$ has an exact left injective resolution.\\ It follows (\cite{enochs:00:relative}) that $X$ is a Gorenstein injective module.\\
%2 $\Rightarrow$ 1 follows by Proposition 2.\\
%3 $\Rightarrow$ 1 by \cite{enochs:00:relative}, Theorem 9.1.10\\
1) $\Rightarrow$ 2) %Since every injective module has finite flat dimension and since $R$ has finite Krull dimension d, it follows that $f.d. E \le d$, for any injective module $E$. By \cite{ding:93: flat.dim.injectives}, Corollary 3.9, we have $i.d. R = sup \{f.d. E, E \in Inj \} \le d$. So $R$ is commutative noetherian of finite injective dimension. Thus $R$ is Gorenstein.\\
By Proposition 3, $i.d._{R^{op}} R  \le n$. Since $R$ is commutative noetherian and such that $i.d._{R^{op}} R  \le n$ it follows that $R$ is a Gorenstein ring.\\

 2)$\Rightarrow$ 1)  Since $R$ is a Gorenstein ring the class $^\bot \mathcal{GI}$ consists of all modules of finite projective dimension ([7]). By [7] again, this is the class $\mathcal{F}$ of modules of finite flat dimension. Then $\mathcal{GI} = (^\bot \mathcal{GI})^\bot = \mathcal{F}^\bot = \mathcal{SC}$.\\

%2 $\Rightarrow$ 1 follows by Proposition 2.
\end{proof}

\bibliographystyle{plain}
%\bibliography{references}

Alina Iacob\\
%Department of Mathematical Sciences\\
Georgia Southern University,
Statesboro, GA 30460-8093\\
%Tel: (912) 478-5390\\
%Fax: (912) 478-0654\\
Email:  aiacob@GeorgiaSouthern.edu

\end{document}